\theoremstyle{plain}
\newtheorem{thm}{Theorem}[section]
\newtheorem{lem}[thm]{Lemma}
\newtheorem{eg}[thm]{Example}
\newtheorem{pro}[thm]{Problem}
\theoremstyle{definition}
\newtheorem{defn}[thm]{Definition}
\theoremstyle{remark}
\newtheorem{rmk}[thm]{Remark}
\begin{document}

\title[$p$-nuclearity of reduced group $L^p$-operator algebras]{$p$-nuclearity of reduced group $L^p$-operator algebras}

\author[Z. Wang]{Zhen Wang}
\address{School of Mathematics\\HangZhou Normal University \\HangZhou 311121\\
P. R. China}
\email{wangzhen@hznu.edu.cn}


\subjclass[2010]{Primary 47L10, 47L25; Secondary 46M05, 43A07}
\keywords{reduced group $L^p$-operator algebras, $L^p$-operator algebras, nuclearity, amenability}

\begin{abstract}
Let $p\in (1,\infty)$ and let $G$ be a discrete group. G. An, J.-J. Lee and Z.-J. Ruan introduced $p$-nuclearity for $L^p$-operator algebras.
They proved that the reduced group $L^p$-operator algebra $F^p_\lambda(G)$ is $p$-nuclear if $G$ is amenable. In this paper, we show that the converse is true. This answers an open problem concerning the $p$-nuclearity for reduced group $L^p$-operator algebras of N. C. Phillips.
\end{abstract}

\date{\today}
\maketitle


\section{Introduction}

For $p\in[1,\infty)$, we say that a Banach algebra $A$ is an {\it $L^{p}$-operator algebra} if
it is isometrically isomorphic to a norm closed subalgebra of the algebra $\mathcal{B}(E)$ of all bounded linear operators on some $L^p$-space $E$.
Clearly, $L^p$-operator algebras {are} a natural generalization of
operator algebras on Hilbert spaces (and in particular $C^*$-algebras) by replacing Hilbert spaces with $L^p$-spaces.

The research on $L^p$-operator algebra has a long History.
It begins with the C. Herz's influential work on harmonic analysis of group algebras on $L^p$-spaces in 1970's \cite{Herz,Herz1,Herz2}. Let $G$ be a locally compact group.
C. Herz introduced the Banach algebra $PF_p(G)$ which is constructed from the
left regular representation of $G$ on $L^p(G)$.
The Banach algebra $PF_p(G)$ is called $p$-pseudofunctions of $G$ by C. Herz. This algebra is also called the reduced group $L^p$-operator algebra of $G$ and it is denoted by $F^p_\lambda(G)$ in \cite{Gardella and Thiel}. If $p=2$, then $F^2_\lambda(G)$ is the reduced group $C^*$-algebra of $G$, which is usually denoted by $C^*_\lambda(G)$.
We will use $F^p_\lambda(G)$ in the following of  this paper.

Recently, people renew interest in $L^p$-operator algebras due to the work of N. C. Phillips.
In the passing ten years, N. C. Phillips  introduced and studied $L^p$-operator algebras \cite{Phillips Lp Cuntz,Odp,Lp UHF,N. C. Phillips Lp, PlpsOpenQues,Phillips look like,Lp AF}. These studies encourage many authors to participate in the research of $L^p$-operator algebras. This includes the work on group $L^p$-operator algebras \cite{Gardella and Thiel}; groupoid $L^p$-operator algebras \cite{Gardella and Lupini groupoid}; $L^p$-operator crossed products \cite{Gardella and Thiel convolution,WZ,WZ2} and the $l^p$-Toeplitz algebra \cite{WW}. Although most previous investigations have been very largely focused on various examples, some recent works were undertaken in a more
abstract and systematic way \cite{Blecher and Phillips,Choi and,Gardella and Thiel isometry}. The reader is referred to \cite{Gardella modern} for more historical comments and recent developments in $L^p$-operator algebras.
Surprisingly, when $p\in[1,\infty)\setminus\{2\}$, the research on $L^p$-operator algebra has many wonderful results for rigidity problems \cite{Choi and,Chung,Gardella and Thiel iso}.

Nuclearity is an important property for $C^*$-algebras. This property was introduced by Takesaki \cite{Takesaki}.
A $C^*$-algebra $A$ is called {\it nuclear} if for any $C^*$-algebra $B$ there is a unique norm on the algebraic tensor product $A\odot B$. By the remarkable work of Lance \cite{Lance}, Choi-Effros \cite{Choi and Effros} and Kirchberg \cite{Kirchberg}, the nuclearity is equivalent to {\it completely positive approximation property}, that is,
there exist nets of contractive completely positive maps  $\varphi_{\alpha}:A\rightarrow M_{n(\alpha)}$ and $\psi_\alpha:M_{n(\alpha)}\rightarrow A$ such that $$\|\psi_\alpha\circ\varphi_\alpha(a)-a\|\rightarrow 0$$ for all $a\in A$.
Also it is well known that the nuclearity is equivalent to the amenability for $C^*$-algebras \cite{Connes,Haagerup}, which was originally introduced by B. E. Johnson \cite{Johnsom} for Banach algebras.

In this paper, we mainly consider $p$-nuclearity of reduced group $L^p$-operator algebras.
In \cite[Proposition 5.1(a)]{an}, G. An, J.-J. Lee and Z.-J. Ruan studied $p$-nuclearity of reduced group $L^p$-operator algebra $F^p_\lambda(G)$, and they proved this algebra $F^p_\lambda(G)$ is $p$-nuclear if $G$ is a discrete amenable group. For each positive integer $n$, we denote $M_n^p=\mathcal{B}(l^p(\{1,2,\cdots,n\},\nu))$, where $\nu$ is the counting measure on $\{1,2,\cdots,n\}$.
The following is the definition of $p$-nuclearity.

\begin{defn}\cite[Proposition 5.1(a)]{an}
Let $(X,\mathcal{B},\mu)$ be a measure space, and let $A\subset \mathcal{B}(L^p(X,\mu))$ be a norm closed subalgebra.  We say that $A$ is $p$-nuclear if there exist nets of $p$-completely contractive maps $\varphi_{\alpha}:A\rightarrow M_{n(\alpha)}^p$ and $\psi_\alpha:M_{n(\alpha)}^p\rightarrow A$ such that $$\|\psi_\alpha\circ\varphi_\alpha(a)-a\|\rightarrow 0$$ for all $a\in A$.
\end{defn}

When $p=2$ and $A$ is a $C^*$-algebra, R. R. Smith proved that $p$-nuclearity is equivalent to nuclearity (see \cite[Theorem 1.1]{Smith}).

\begin{rmk}
The $p$-nuclearity is not equivalent to the amenability of $L^p$-operator algebras. The reader is referred to \cite[Remark 1.4(iii)]{WZ2} for some examples.
\end{rmk}

\begin{eg}[{\cite{an, WZ2}}]
Let $p\in [1,\infty)$.
The following are the examples of $p$-nuclear $L^p$-operator algebras:

\begin{enumerate}
\item[(i)] $C(X)$, where $X$ is a compact metric space;
\item[(ii)]  $M_n^p$ and $\overline{\bigcup_{n=1}^\infty M_n^p}$ ;
\item[(iii)] the reduced group $L^p$-operator algebra  $F^p_\lambda(G)$, where $G$ is a discrete amenable group;
\item[(iv)]  the $L^p$-Cuntz algebra $\mathcal{O}_d^p$;
\item[(v)] the rotation $L^p$-operator algebra $F^p(\mathbb{Z},F^p(\mathbb{Z}),\beta_\theta)$ and $F^p(\mathbb{Z},S^1,\alpha_\theta)$.
\end{enumerate}

\end{eg}

Since the $F^1_\lambda(G)$ is always $1$-nuclear for discrete group $G$ (see \cite[Theorem 6.4]{an}), we only consider the following problem for $p\in (1,\infty)$.
\begin{pro}[{\cite[Problem 10.4]{PlpsOpenQues}}]\label{P:main}
Let $p\in (1,\infty)$. If $G$ is a discrete group and $F^p_\lambda(G)$ is $p$-nuclear, does it follow that $G$ is amenable?
\end{pro}

E. C. Lance proved that the reduced group $C^*$-algebra $C^*_\lambda(G)$ is nuclear if and only if $G$ is amenable for discrete group $G$ (see \cite[Theorem 4,2]{Lance} or \cite[Theorem 2.6.8]{Brown and Ozawa}).
The Arveson's Extension Theorem is very useful to prove that the nuclearity of $C^*_\lambda(G)$ implies the amenability of $G$. However, J-J. Lee gave an example that the  Arveson-Wittstock-Hahn-Banach Theorem does not hold for $p$-operator space \cite{Lee}, that is, there are $p$-operator spaces $V\subset W$, an $\mathcal{SQ}_p$ space $E$, and a $p$-completely
contractive map $\varphi: V\rightarrow\mathcal{B}(E)$ such that $\varphi$ does not extend to a $p$-completely contractive
map on $W$.
Inspired by the method of C. Anantharaman-Delaroche (see \cite[Proposition 3.5]{claire}) and G. Pisier (see \cite[Theorem 3.30]{Pisier}), we solve N. C. Phillips’ problem by proving the following theorem.

\begin{thm} \label{main thm}
Let $p\in (1,\infty)$ and let $G$ be a discrete group. The following are equivalent:
\begin{enumerate}
\item[(i)] $G$ is amenable;
\item[(ii)]  $F^p_\lambda(G)$ is $p$-nuclear;
\item[(iii)] the canonical map $F^p_\lambda(G)\stackrel{\wedge_p}{\otimes} F^p_\lambda(G)\rightarrow F^p_\lambda(G)\stackrel{\vee_p}\otimes F^p_\lambda(G)$ is an isomorphism as $p$-operator spaces;
\item[(iv)] the canonical linear map $h:F^p_\lambda(G)\odot F^p_\lambda(G)\rightarrow \mathcal{B}(l^p(G))$ given by $h\left(\lambda_p(s)\odot \lambda_p(t)\right)=\lambda_p(s)\rho_p(t)$ is continuous with respect to the $p$-operator space injective tensor norm, where $\rho_p$ is the right regular representation of $G$ and $\odot$ is the algebraic tensor product;
\item[(v)]  for any $f\in C_c(G)$, we have $\|\lambda_p(f)\|\geq |\sum_{t\in G}f(t)|$;
\item[(vi)] for any finite subset $E\subset G$, we have $|E|=\|\sum_{t\in E}\lambda_p(t)\|$.
\end{enumerate}
\end{thm}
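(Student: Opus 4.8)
The plan is to prove the theorem as a cycle of implications, establishing

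$$\text{(i)}\Rightarrow\text{(ii)}\Rightarrow\text{(iii)}\Rightarrow\text{(iv)}\Rightarrow\text{(v)}\Rightarrow\text{(vi)}\Rightarrow\text{(i)}.$$

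The implication $\text{(i)}\Rightarrow\text{(ii)}$ is already available: it is exactly An--Lee--Ruan's result quoted in the excerpt (\cite[Proposition 5.1(a)]{an}), so nothing new is required there. The real content lies in the new implications, and the strategy is to use the projective/injective $p$-operator space tensor norms as the bridge between the approximation-theoretic condition (ii) and the concrete operator-norm inequalities (v)--(vi), culminating in a Følner-type or Kesten-type characterization of amenability.

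Let me think through the key steps. For $\text{(ii)}\Rightarrow\text{(iii)}$, the point is that $p$-nuclearity gives factorizations of the identity on $F^p_\lambda(G)$ through the matrix algebras $M_n^p$, and on $M_n^p$ the projective and injective $p$-operator tensor norms coincide (finite dimensionality); a standard limiting argument over the net then forces the canonical map $\wedge_p\otimes \to \vee_p\otimes$ to be an isometric isomorphism. For $\text{(iii)}\Rightarrow\text{(iv)}$, I would identify the map $h$ with the canonical map of (iii) composed with the multiplication map, using that $\lambda_p(t)$ and $\rho_p(t)$ commute and that the right regular representation gives a $p$-completely isometric copy of $F^p_\lambda(G)$ inside $\mathcal{B}(l^p(G))$ commuting with the left copy. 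The crux is that continuity with respect to the injective norm $\vee_p$ is automatic once (iii) identifies the two norms, since $h$ is bounded on the projective side by the commuting representation. The most delicate quantitative step is $\text{(iv)}\Rightarrow\text{(v)}$: here I would apply the injective-norm continuity of $h$ to a suitable element built from $f\in C_c(G)$, and evaluate on the vector $\delta_e\in l^p(G)$, exploiting that the injective tensor norm admits a concrete estimate via states/functionals on each factor. Applying $h$ to $\sum_t f(t)\,\lambda_p(t)\odot\lambda_p(t^{-1})$ and testing against $\delta_e$ should produce $\sum_t f(t)$ on the left and a quantity bounded by $\|\lambda_p(f)\|$ times the injective-norm bound on the right, which is where the inequality $\|\lambda_p(f)\|\ge |\sum_t f(t)|$ emerges.

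For $\text{(v)}\Rightarrow\text{(vi)}$, taking $f=\sum_{t\in E}\delta_t$ gives $\|\sum_{t\in E}\lambda_p(t)\|\ge |E|$ from (v), while the reverse inequality $\|\sum_{t\in E}\lambda_p(t)\|\le |E|$ is the trivial triangle inequality together with $\|\lambda_p(t)\|=1$; so (vi) follows as an equality. Finally, $\text{(vi)}\Rightarrow\text{(i)}$ is the Kesten-type criterion: the condition $\|\sum_{t\in E}\lambda_p(t)\|=|E|$ for all finite $E$ says that the norm of the averaged regular representation operator attains its maximal value, which by the $L^p$-analogue of Kesten's theorem (or directly by a Følner-set argument testing $\sum_{t\in E}\lambda_p(t)$ against approximate invariant vectors in $l^p(G)$) forces amenability. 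I expect $\text{(iv)}\Rightarrow\text{(v)}$ to be the main obstacle, precisely because the Arveson--Wittstock extension theorem fails for $p$-operator spaces (as the excerpt emphasizes via Lee's counterexample), so I cannot simply extend functionals and must instead extract the scalar inequality by a hands-on computation with the injective tensor norm and the commuting left/right regular representations, following the Anantharaman-Delaroche and Pisier templates cited in the introduction.
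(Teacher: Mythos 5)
Your plan is essentially the paper's proof: the same cycle $(\mathrm{i})\Rightarrow(\mathrm{ii})\Rightarrow(\mathrm{iii})\Rightarrow(\mathrm{iv})\Rightarrow(\mathrm{v})\Rightarrow(\mathrm{vi})\Rightarrow(\mathrm{i})$, with (ii)$\Rightarrow$(iii) by factoring through $M_{n(\alpha)}^p$ where the projective and injective $p$-operator tensor norms are identified, (iii)$\Rightarrow$(iv) from boundedness of $h$ on the projective tensor product (it being the largest $p$-operator space norm), (iv)$\Rightarrow$(v) by evaluating at $\delta_e$ via the conjugation representation $\sigma_p(s)=\lambda_p(s)\rho_p(s)$, and (vi)$\Rightarrow$(i) exactly the Kesten-type extraction of almost-invariant vectors, which the paper carries out using uniform (full $k$-) convexity of $l^p(G)$ and the Reiter-type definition of amenability. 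One small correction to your hands-on step in (iv)$\Rightarrow$(v): under the paper's convention $\rho_p(s)\delta_t=\delta_{ts^{-1}}$ the correct test element is the diagonal $\sum_t f(t)\,\lambda_p(t)\odot\lambda_p(t)$, whose image $\sigma_p(f)$ fixes $\delta_e$ and yields $\sum_t f(t)$ there (your $\sum_t f(t)\,\lambda_p(t)\odot\lambda_p(t^{-1})$ gives $\sum_t f(t)\delta_{t^2}$ at $\delta_e$ and is right only under the opposite convention $\rho_p(s)\delta_t=\delta_{ts}$), and the quantitative input you leave vague --- that $\bigl\|\sum_t f(t)\,\lambda_p(t)\otimes\lambda_p(t)\bigr\|_{\vee_p}\le\|\lambda_p(f)\|$ --- is a Fell-type absorption (the injective norm is spatial, and the isometry $W\delta_{(a,b)}=\delta_{(a,a^{-1}b)}$ conjugates the diagonal to $\lambda_p\otimes 1$), which is the content of the paper's Claim 1.
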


\begin{rmk}
The condition (v) implies that the trivial representation $\mathrm{1}_G$ extends to a representation of $F^p_\lambda(G)$. This is also equivalent to the amenability of $G$ (see \cite[Theorem 5.2]{Emilie}).
\end{rmk}


The paper is organized as follows.
In Section 2, we make some preparation for the proof of Theorem \ref{main thm}.
In section 3, we give a proof of Theorem \ref{main thm}.

\section{Preliminaries}
In this section, we recall some basic notations, definitions and lemmas for the proof of Theorem \ref{main thm}.

\subsection{$p$-operator spaces}

$p$-operator spaces are closely related to $L^p$-operator algebras.
Let $p\in (1,\infty)$. A $p$-operator space is defined to be a Banach space together with a matrix norm, i.e. a norm $\|\cdot\|_n$ on each matrix space $M_n(V)$, which satisfies the following two conditions
\begin{enumerate}
\item[(i)] $\mathcal{D}_\infty: \|x\oplus y\|_{n+m}=\max \{\|x\|_n,\|y\|_n\}$ for $x\in M_n(V)$ and $y\in M_m(V)$,
\item[(ii)] $\mathcal{M}_p: \|\alpha x \beta\|_n\leq\|\alpha\|\|x\|_n\|\beta\|$ for $x\in M_n(V)$ and $\alpha,\beta\in M_n^p$.
\end{enumerate}

Let $V$ and $W$ be $p$-operator spaces. We say that a linear map $\varphi: V \rightarrow W$ is $p$-completely bounded if
$$
\|\varphi\|_{p c b}=\sup _{n\in \mathbb{Z}_{>0}}\left\{\left\|\varphi_n\right\|\right\}<\infty
$$
where $\varphi_n:\left[x_{i j}\right] \in M_n(V) \rightarrow\left[\varphi\left(x_{i j}\right)\right] \in M_n(W)$ is the induced map from $M_n(V)$ to $M_n(W)$. We say that $\varphi$ is a $p$-complete contraction (respectively, a $p$-complete isometry) if $\|\varphi\|_{p c b} \leqslant 1$ (respectively, $\varphi_n$ is an isometry for each $n \in \mathbb{Z}_{>0})$.

Let $E$ be an $L^p$-space and let $n$ be a positive integer. Then $E^n=l^p(\{1,2,\cdots n\}, E)$ with the norm $\|[x_i]\|=\left(\sum_{i=1}^n
\|x_i\|^p\right)^{\frac{1}{p}}$ is again an $L^p$-space. We can obtain a norm $\|\cdot\|_n$ on the matrix space $M_n\left(\mathcal{B}(E)\right)$ by the canonical identification $M_n\left(\mathcal{B}(E)\right)\cong \mathcal{B}(E^n)$.
Then it follows from \cite{kwa} that $\mathcal{B}(E)$ is a $p$-operator space. On the other hand, Le Merdy prove that every $p$-operator space is $p$-completely isometrically isomorphic to a norm closed subspace of $\mathcal{B}(E)$ for some $E\in \mathcal{SQ}_p$ (see \cite[Theorem 4.1]{Merdy}), where $\mathcal{SQ}_p$ is the collection of subspaces of quotients of $L^p$-spaces. The reader is referred to \cite{an,Daws1,Lee} for more research on $p$-operator spaces.

The $p$-operator space projective tensor norm $\|\cdot\|_{\wedge_p, n}$ on $M_n(V \otimes W)$ is defined by
$$
\begin{aligned}
& \|u\|_{\wedge_p, n}=\inf \{\|\alpha\|\|v\|\|w\|\|\beta\|: u=\alpha(v \otimes w) \beta \\
& \left.\quad \text { for } \alpha \in M_{n, k l}, v \in M_k(V), w \in M_l(W) \text { and } \beta \in M_{k l, n}\right\} .
\end{aligned}
$$
We let $V \stackrel{\wedge_p}\otimes W$ denote the completion of $V\otimes W$ with respect to this matrix norm, and
call $V \stackrel{\wedge_p}\otimes W$ the $p$-operator space projective tensor product of $V$ and $W$.

The following Lemma is a functorial property of the $p$-operator space projective tensor product.
\begin{lem}[{\cite[see page 938]{an}}]\label{pro func}
Let $V_1, V_2, W_1$ and $W_2$ be $p$-operator spaces. If $u_i:V_i\rightarrow W_i$ are $p$-complete contractions for
$i =1,2$, then the corresponding mapping $$u_1\otimes u_2:V_1\otimes V_2\rightarrow W_1\otimes W_2$$
extends to a $p$-complete contraction $$u_1\stackrel{\wedge_p}\otimes u_2: V_1\stackrel{\wedge_p}\otimes V_2\rightarrow W_1\stackrel{\wedge_p}\otimes W_2.$$
\end{lem}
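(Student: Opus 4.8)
The plan is to prove the matrix-level inequality $\|(u_1\otimes u_2)_n(u)\|_{\wedge_p,n}\le\|u\|_{\wedge_p,n}$ for every $n\in\mathbb{Z}_{>0}$ directly from the definition of the projective matrix norm; this shows that the algebraic map $u_1\otimes u_2$ is $p$-completely contractive, and the extension to the completion then follows by density of $V_1\otimes V_2$ in $V_1\stackrel{\wedge_p}\otimes V_2$. First I fix $n$ and an element $u\in M_n(V_1\otimes V_2)$, and I choose an arbitrary factorization $u=\alpha(v\otimes w)\beta$ with $\alpha\in M_{n,kl}$, $v\in M_k(V_1)$, $w\in M_l(V_2)$ and $\beta\in M_{kl,n}$, as in the definition of $\|\cdot\|_{\wedge_p,n}$. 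The amplification $(u_1\otimes u_2)_n$ acts entrywise on the $V_1\otimes V_2$-component of each matrix entry, while $\alpha$ and $\beta$ are scalar matrices; by linearity, applying the map entrywise commutes with left multiplication by $\alpha$ and right multiplication by $\beta$, so that $(u_1\otimes u_2)_n(\alpha X\beta)=\alpha\,(u_1\otimes u_2)_{kl}(X)\,\beta$ for any $X\in M_{kl}(V_1\otimes V_2)$.

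The key algebraic identity I must establish is
$$(u_1\otimes u_2)_n(u)=\alpha\bigl((u_1)_k(v)\otimes(u_2)_l(w)\bigr)\beta.$$
By the previous remark with $X=v\otimes w$, this reduces to checking that $(u_1\otimes u_2)_{kl}$ sends the image of $v\otimes w$ under the shuffle identification $M_k(V_1)\otimes M_l(V_2)\cong M_{kl}(V_1\otimes V_2)$ to the image of $(u_1)_k(v)\otimes(u_2)_l(w)$. Writing $v=[v_{ij}]$ and $w=[w_{rs}]$, the $((i,r),(j,s))$-entry of $v\otimes w$ is $v_{ij}\otimes w_{rs}$, which $(u_1\otimes u_2)_{kl}$ sends to $u_1(v_{ij})\otimes u_2(w_{rs})$; this is precisely the corresponding entry of $(u_1)_k(v)\otimes(u_2)_l(w)$, so the two sides agree entrywise. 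This is the step requiring the most care, as it hinges on the compatibility of the tensor map with the matrix shuffle, but once the indexing is set up it is purely formal.

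Granting the identity, the right-hand side is a valid factorization of $(u_1\otimes u_2)_n(u)$ in $M_n(W_1\otimes W_2)$ with the same scalar matrices $\alpha,\beta$ and with $(u_1)_k(v)\in M_k(W_1)$, $(u_2)_l(w)\in M_l(W_2)$. The definition of the projective norm therefore gives
$$\bigl\|(u_1\otimes u_2)_n(u)\bigr\|_{\wedge_p,n}\le\|\alpha\|\,\bigl\|(u_1)_k(v)\bigr\|\,\bigl\|(u_2)_l(w)\bigr\|\,\|\beta\|.$$
Since $u_1$ and $u_2$ are $p$-complete contractions we have $\|(u_1)_k(v)\|\le\|v\|$ and $\|(u_2)_l(w)\|\le\|w\|$, so the right-hand side is bounded by $\|\alpha\|\,\|v\|\,\|w\|\,\|\beta\|$. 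Taking the infimum over all factorizations of $u$ yields $\|(u_1\otimes u_2)_n(u)\|_{\wedge_p,n}\le\|u\|_{\wedge_p,n}$ for every $n$, hence $\|u_1\otimes u_2\|_{pcb}\le 1$. Finally, because $V_1\otimes V_2$ is dense in $V_1\stackrel{\wedge_p}\otimes V_2$ and $u_1\otimes u_2$ is $p$-completely contractive, it extends uniquely to a $p$-complete contraction $u_1\stackrel{\wedge_p}\otimes u_2:V_1\stackrel{\wedge_p}\otimes V_2\rightarrow W_1\stackrel{\wedge_p}\otimes W_2$, which completes the proof.
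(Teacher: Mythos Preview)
Your argument is correct and is precisely the standard proof of functoriality for the $p$-operator space projective tensor product: push a factorization $u=\alpha(v\otimes w)\beta$ through the tensor map, use $p$-complete contractivity of $u_1$ and $u_2$ on the inner factors, and take the infimum. The paper itself does not supply a proof of this lemma; it merely cites \cite[p.~938]{an}, where the same argument appears, so your approach coincides with the one in the cited reference.
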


We let $\mathcal{C B}_p(V, W)$ denote the space of $p$-completely bounded maps from $V$ to $W$. It follows from Le Merdy's characterization theorem that $\mathcal{CB}_p(V, W)$ is a $p$-operator space with the matrix norm given by $$M_n\big(\mathcal{CB}_p\left(V,W\right)\big)=\mathcal{CB}_p\big(V,M_n(W)\big).$$
In particular, the dual space $V^{\prime}=\mathcal{CB}_p(V,\mathbb{C})$ has a natural $p$-operator space structure given by $$M_n(V^{\prime})=\mathcal{CB}_p(V,M_n^p).$$

Let $V$ and $W$ be $p$-operator spaces. There exists an injective embedding
$$
\theta: x \otimes y \in V \otimes W \hookrightarrow \theta(x \otimes y) \in \mathcal{C B}_p\left(V^{\prime}, W\right)
$$
given by $\theta(x \otimes y)(f)=f(x) y$ for $f \in V^{\prime}$. The completion $V\stackrel{\vee_p} \otimes W$ of $V \otimes W$ in $\mathcal{C B}_p\left(V^{\prime}, W\right)$ is a $p$-operator subspace of $\mathcal{C B}_p\left(V^{\prime}, W\right)$. We call $V \stackrel{\vee_p}{\otimes} W$ the $p$-operator space injective tensor product of $V$ and $W$.
Let $M_m(V^{\prime})_1$ and $M_k(W^{\prime})_1$ denote the closed unit ball of $M_m(V^{\prime})$ and $M_k(W^{\prime})$, respectively.
It follows from \cite{an} that for each $u \in M_n(V \otimes W)$, the $p$-operator space injective tensor norm $\|u\|_{\vee_p, n}$ can be expressed by
$$
\|u\|_{\vee_p, n}=\sup \left\{\left\|(\varphi \otimes \psi)_n(u)\right\|: \varphi \in M_m\left(V^{\prime}\right)_1, \psi \in M_k\left(W^{\prime}\right)_1, m, k \in \mathbb{Z}_{>0}\right\}.
$$

The following Lemma is a functorial property of the $p$-operator space injective tensor product.
\begin{lem}[{\cite[see page 942]{an}}]\label{inj func}
Let $V_1, V_2, W_1$ and $W_2$ be $p$-operator spaces. If $u_i:V_i\rightarrow W_i$ are $p$-complete contractions for
$i =1,2$, then the corresponding mapping $$u_1\otimes u_2:V_1\otimes V_2\rightarrow W_1\otimes W_2$$
extends to a $p$-complete contraction
$$u_1\stackrel{\vee_p}\otimes u_2: V_1\stackrel{\vee_p}\otimes V_2\rightarrow W_1\stackrel{\vee_p}\otimes W_2.$$.
\end{lem}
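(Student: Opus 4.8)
The plan is to establish the matrix-norm contraction inequality
$$\|(u_1 \otimes u_2)_n(u)\|_{\vee_p, n} \leq \|u\|_{\vee_p, n}$$
for every $n \in \mathbb{Z}_{>0}$ and every $u \in M_n(V_1 \otimes V_2)$. This inequality says exactly that $u_1 \otimes u_2 \colon V_1 \otimes V_2 \to W_1 \otimes W_2$ is a $p$-complete contraction for the injective matrix norms; since $V_1 \otimes V_2$ is dense in $V_1 \stackrel{\vee_p}{\otimes} V_2$, the desired map $u_1 \stackrel{\vee_p}{\otimes} u_2$ is obtained by uniform-continuity extension, and its $p$-complete contractivity is inherited in the limit.

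To prove the inequality I would work entirely through the dual-pairing description of the injective norm recalled above. Fix $u \in M_n(V_1 \otimes V_2)$ and arbitrary $\varphi \in M_m(W_1')_1 = \mathcal{CB}_p(W_1, M_m^p)_1$ and $\psi \in M_k(W_2')_1 = \mathcal{CB}_p(W_2, M_k^p)_1$. The key is the composition identity, checked first on elementary tensors,
$$(\varphi \otimes \psi)\big((u_1 \otimes u_2)(x_1 \otimes x_2)\big) = \varphi(u_1(x_1)) \otimes \psi(u_2(x_2)) = \big((\varphi \circ u_1) \otimes (\psi \circ u_2)\big)(x_1 \otimes x_2),$$
which extends by linearity and, applied entrywise, yields the identity of amplified maps $(\varphi \otimes \psi)_n \circ (u_1 \otimes u_2)_n = ((\varphi \circ u_1) \otimes (\psi \circ u_2))_n$. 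Now $\varphi \circ u_1$ and $\psi \circ u_2$ are compositions of $p$-complete contractions, hence are themselves $p$-complete contractions into $M_m^p$ and $M_k^p$; equivalently $\varphi \circ u_1 \in M_m(V_1')_1$ and $\psi \circ u_2 \in M_k(V_2')_1$. Therefore the pair $(\varphi \circ u_1, \psi \circ u_2)$ is admissible in the supremum defining $\|u\|_{\vee_p, n}$, and
$$\|(\varphi \otimes \psi)_n((u_1 \otimes u_2)_n(u))\| = \|((\varphi \circ u_1) \otimes (\psi \circ u_2))_n(u)\| \leq \|u\|_{\vee_p, n}.$$
Taking the supremum over all admissible $\varphi, \psi, m, k$ gives the claimed inequality.

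The main obstacle is not a hard estimate but the consistent bookkeeping of amplifications and tensor identifications. Concretely, one must confirm that the composition identity genuinely survives passage to $M_n$, i.e.\ that both sides agree as maps $M_n(V_1 \otimes V_2) \to M_{nmk}^p$ under the identification $M_m^p \otimes M_k^p \cong M_{mk}^p$, and that precomposition with a $p$-complete contraction carries the matrix unit ball $M_m(W_1')_1$ into $M_m(V_1')_1$. Both facts rest on the identification $M_n(\mathcal{CB}_p(V, W)) = \mathcal{CB}_p(V, M_n(W))$ together with the $\mathcal{D}_\infty$ and $\mathcal{M}_p$ axioms governing the matrix norms; once these are in place, the chain of (in)equalities above closes the argument, and the symmetric statement for the projective tensor product in Lemma \ref{pro func} can be proved by the dual (infimum) version of the same bookkeeping.
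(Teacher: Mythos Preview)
Your argument is correct. The paper itself does not prove this lemma; it is quoted from \cite{an} (page 942) and used as a black box. The route you take---pulling back test functionals $\varphi\in M_m(W_1')_1$, $\psi\in M_k(W_2')_1$ along $u_1,u_2$ to obtain admissible functionals in $M_m(V_1')_1$, $M_k(V_2')_1$, and then reading off the contraction from the supremum formula for $\|\cdot\|_{\vee_p,n}$---is exactly the standard proof of functoriality for injective-type tensor norms and is what one expects the cited reference to contain. The bookkeeping you flag (that $\varphi\circ u_1\in M_m(V_1')_1$ via $M_m(V')=\mathcal{CB}_p(V,M_m^p)$, and that the amplified composition identity holds) is routine and poses no obstruction.
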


\subsection{Spatial $L^p$-operator tensor products and $p$-completely bounded maps of $L^p$-operator algebras}
Let $p\geq 1$.
Let $(X,\mu)$ and $(Y,\nu)$ be two measure space, there is an $L^p$-tensor product $L^p(X,\mu)\otimes_p L^p(Y,\nu)$ which can be canonical identified with $L^p(X\times Y,\mu\times\nu)$ via $\xi\otimes\eta(x,y)=\xi(x)\eta(y)$ for all $\xi\in L^p(X,\mu)$ and $\eta\in L^p(Y,\nu)$. If $a\in \mathcal{B}(L^p(X,\mu))$ and $b\in \mathcal{B}(L^p(Y,\nu))$, then there is a corresponding tensor product operator $a\otimes b\in \mathcal{B}(L^p(X\times Y,\mu\times \nu))$. Let $A\subset \mathcal{B}(L^p(X,\mu))$ and $B\subset \mathcal{B}(L^p(Y,\nu))$ be two norm closed subalgebra. Define the algebra $A\otimes_p B\subset \mathcal{B}(L^p(X\times Y, \mu\times \nu))$ to be the closed linear span of all $a\in A$ and $b\in B$. Then $A\otimes_p B$ is an $L^p$-operator algebra, and it is called the spatial $L^p$-operator tensor product of $A$ and $B$.

\begin{rmk}
 Let $A\subset \mathcal{B}\left(L^p(X,\mu)\right)$ and $B\subset \mathcal{B}\left(L^p(Y,\nu)\right)$ be $L^p$-operator algebras. Then it follows from \cite[Theorem 3.3]{an} that $A\stackrel{\vee_p}\otimes B$ is $p$-completely isometric to $A\otimes_p B$.
\end{rmk}

Given a norm closed subalgebra $A$ of $\mathcal{B}\left(L^p(X,\mu)\right)$, the spatial tensor product $M_n^p\otimes_p A$ is the $L^p$-matrix algebra.
Clearly, each element of $M_n^p\otimes_p A$ is of form $[a_{i,j}]_{1\leq i,j\leq n}$ with $a_{i,j}\in A$,
which is also written as $\sum_{i,j=1}^n e_{i,j}\otimes a_{i,j}$, where $\{e_{i,j}\}_{1\leq i,j\leq n}$ are the canonical matrix units of $M_n^p$.

\begin{defn} 
Let $A$ be a closed subalgebra of $\mathcal{B}(L^p(X,\mu))$, $B$ be a closed subalgebra of $\mathcal{B}(L^p(Y,\nu))$
and $\varphi$ be a linear map $\varphi: A\rightarrow B$.  We denote by $\varphi_n$ the map from $M_n^p\otimes_p A$ to $M_n^p\otimes_p B$ defined by $$\varphi_n\left(\sum_{i,j=1}^n e_{i,j}\otimes a_{i,j}\right)=\sum_{i,j=1}^n e_{i,j}\otimes \varphi(a_{i,j}) $$ for
$\sum_{i,j=1}^n e_{i,j}\otimes a_{i,j}\in M_n^p\otimes_p A$.
We denote $$\|\varphi\|_{pcb}=\sup_{n\in \mathbb{Z}_{>0}}\| \varphi_n\|.$$
We say that $\varphi$ is {\it $p$-completely bounded} if $\|\varphi\|_{pcb}\leq C$ for some positive constant $C$, say that $\varphi$ is {\it $p$-completely contractive} if $\|\varphi\|_{pcb}\leq 1$, and say that $\varphi$ is {\it $p$-completely isometric} if $\varphi_n$ is isometric for all positive integer  $n$.
\end{defn}

\subsection{The regular representation and Amenability}

For a discrete group $G$, we let $\lambda_p:G\rightarrow \mathcal{B}(l^p(G))$ denote the {\it left regular representation}, that is $\lambda_p(s)(\delta_t)=\delta_{st}$ for all $s,t\in G$, where $\{\delta_t:t\in G\}\subset l^p(G)$ is the canonical basis. The {\it reduced group $L^p$-operator algebra} of $G$, denoted $F^p_\lambda(G)$, is the completion of $C_c(G)$ with respect to the norm $\|\lambda_p(f)\|.$

 There is also a {\it right regular representation} $\rho_p:G\rightarrow \mathcal{B}(l^p(G))$, defined by $\rho_p(s)\delta_t=\delta_{ts^{-1}}$.
We claim that
\begin{align}\label{lr}
\|\lambda_p(f)\|=\|\rho_p(f)\|
\end{align}
for all $f\in C_c(G)$. Let $V:l^p(G)\rightarrow l^p(G)$ be the invertible isometry given by $V\delta_t=\delta_{t^{-1}}$. Then one can check that $$V\lambda_p(f)V^{-1}\delta_t=\rho_p(f)\delta_t.$$ This will prove the claim.

Let $\sigma_p$ be the {\it conjugacy representation} of $G$ on $l^p(G)$ which is defined by $\sigma_p(s)=\lambda_p(s)\rho_p(s)$. It is easy check that $\sigma_p(s)\delta_e=\delta_e$, where $e$ is the unit of group $G$.

There are many equivalent definitions for amenable groups. We will use the following definition in the proof of the Theorem \ref{main thm}.
\begin{defn}[{\cite[Definition 11.2.3]{Dales}}]\label{amenable}
Let $p\in[1,\infty)$ and let $G$ be a discrete group. The group $G$ is amenable if there exists a net $f_\alpha\in l^p(G)$ such that $f_\alpha \geq 0$, $\|f_\alpha\|_p=1$ and $\|\lambda_p(s)f_\alpha-f_\alpha\|_p\rightarrow 0$ for all $s\in G$.
\end{defn}

\section{Proof of Theorem \ref{main thm}}

In this section, we prove the Theorem \ref{main thm}.

\begin{proof}[Proof of Theorem \ref{main thm}]
(i)$\Rightarrow$ (ii): It follows from \cite[Proposition 5.1]{an}

(ii)$\Rightarrow$ (iii): Let $\varphi_\alpha:F^p_\lambda(G)\rightarrow M_{n(\alpha)}^p$ and $\psi_\alpha:M_{n(\alpha)}^p\rightarrow F^p_\lambda(G)$ be the nets of $p$-completely contractive maps such that $$\|\psi_\alpha\circ\varphi_\alpha(a)-a\|\rightarrow 0$$ for all $a\in F^p_\lambda(G).$ Since $M_{n(\alpha)}^p$ has $p$-OAP, it follows from \cite[Theorem 3.12]{an} that $M_{n(\alpha)}^p\stackrel{\vee_p}{\otimes} F^p_\lambda(G)$ is isomorphic to $M_{n(\alpha)}^p\stackrel{\wedge_p}\otimes F^p_\lambda(G)$ and this isomorphism is denoted by $\Psi$. Let $\pi$ be the canonical map from $F^p_\lambda(G)\stackrel{\wedge_p}\otimes F^p_\lambda(G)$ to $F^p_\lambda(G)\stackrel{\vee_p}{\otimes} F^p_\lambda(G).$ By Lemma \ref{pro func} and Lemma \ref{inj func}, we have the following
$$\begin{CD}
 F^p_\lambda(G)\stackrel{\wedge_p}\otimes F^p_\lambda(G) @>\pi>>  F^p_\lambda(G)\stackrel{\vee_p}{\otimes} F^p_\lambda(G) @ >\varphi_{n(\alpha)}\stackrel{\vee_p}{\otimes} \mathrm{Id}_{F^p_\lambda(G)}>>M_{n(\alpha)}^p \stackrel{\vee_p}\otimes F^p_\lambda(G) \\
@.     @. @VV\Psi V\\
@.      F^p_\lambda(G)\stackrel{\wedge_p}\otimes F^p_\lambda(G) @<\psi_{n(\alpha)}\stackrel{\wedge_p}{\otimes} \mathrm{Id}_{F^p_\lambda(G)}<< M_{n(\alpha)}^p\stackrel{\wedge_p}\otimes F^p_\lambda(G)
\end{CD} .$$
Let $\Phi_\alpha= \left(\psi_{n(\alpha)}\stackrel{\wedge_p}{\otimes} \mathrm{Id}_{F^p_\lambda(G)}\right)\circ \Psi\circ \left( \varphi_{n(\alpha)}\stackrel{\vee_p}{\otimes} \mathrm{Id}_{F^p_\lambda(G)}\right)$. Then $\Phi_\alpha$ is a net of bounded linear maps from $F^p_\lambda(G)\stackrel{\vee_p}{\otimes} F^p_\lambda(G)$ to $F^p_\lambda(G)\stackrel{\wedge_p}\otimes F^p_\lambda(G)$.
We denote by $\mathrm{Id}_{F^p_\lambda(G)}\odot \mathrm{Id}_{F^p_\lambda(G)}:
F^p_{\lambda}(G)\odot F^p_{\lambda}(G)\rightarrow F^p_{\lambda}(G)\odot F^p_{\lambda}(G)$ the algebraic tensor product map. Since $\|\Phi_\alpha(x)-\mathrm{Id}_{F^p_\lambda(G)}\odot \mathrm{Id}_{F^p_\lambda(G)}(x)\|\rightarrow 0$ for all $x\in F^p_{\lambda}(G)\odot F^p_{\lambda}(G)$,
it follows that $\mathrm{Id}_{F^p_\lambda(G)}\odot \mathrm{Id}_{F^p_\lambda(G)}$ is a bounded linear map from $F^p_\lambda(G)\stackrel{\vee_p}{\otimes} F^p_\lambda(G)$ to $F^p_\lambda(G)\stackrel{\wedge_p}\otimes F^p_\lambda(G)$.
Hence it extends to a bounded linear map $\Phi:F^p_\lambda(G)\stackrel{\vee_p}{\otimes} F^p_\lambda(G)\rightarrow F^p_\lambda(G)\stackrel{\wedge_p}\otimes F^p_\lambda(G)$. This proves (iii).

(iii)$\Rightarrow$ (iv): We denote by $\lambda_p \cdot \rho_p$ the biregular representation $(s,t)\rightarrow \lambda_p(s)\rho_p(t)$ of $G\times G$ on $l^p(G)$. Since $\|\cdot\|_{\wedge_p}$ is the largest $p$-operator space norm \cite[Proposition 4.8]{Daws1}, it follows that the canonical linear map $$h:F^p_\lambda(G)\odot F^p_\lambda(G)\rightarrow B(l^p(G))$$ defined by $$h(\lambda_p(s)\otimes\lambda_p(t))=\lambda_p(s)\rho_p(t)$$ has a continuous extension on $F^p_\lambda(G)\stackrel{\wedge_p}\otimes F^p_\lambda(G)$ and it is denoted by $(\lambda_p\cdot\rho_p)_\lambda$. By (iii), there exists a bounded linear map from $F^p_\lambda(G)\stackrel{\vee_p}{\otimes} F^p_\lambda(G)$ to $B(l^p(G))$ which is denoted by $\widetilde{(\lambda_p\cdot\rho_p)_\lambda}$. This proves (iv).

(iv)$\Rightarrow$ (v):
We recall that $\sigma_p$ is the conjugacy representation $s\mapsto \lambda_p(s)\rho_p(s)$ of $G$ on $l^p(G)$.
By (iv), the following diagram is commutative
$$\begin{CD}
  F^p(G) @>\sigma_p>>  B(l^p(G)) \\
  @VV \lambda_p V @AA \widetilde{(\lambda_p\cdot\rho_p)_\lambda} A  \\
  F^p_\lambda(G) @>\iota>> F^p_\lambda(G))\stackrel{\vee_p}{\otimes} F^p_\lambda(G),
\end{CD}$$
 where $\iota(s)=\lambda_p(s)\otimes\lambda_p(s)$ for each $s\in G$.
Let $\theta=\widetilde{(\lambda_p\cdot\rho_p)_\lambda}\circ\iota$. Then $\sigma_p=\theta\circ\lambda_p$.

{\it Claim 1.} $\|\theta\|\leq 1$.

For any $f\in C_c(G)$ with $\|\lambda_p(f)\|\leq 1$ and $\xi\in l^p(G)$, by (\ref{lr}), we have
$$\|\theta(f)\xi\|=\|\lambda_p(f)\rho_p(f)\xi\|\leq \|\lambda_p(f)\|\cdot\|\rho_p(f)\|\cdot\|\xi\|=\|\lambda_p(f)\|^2\cdot\|\xi\|.$$
Hence $\|\theta(f)\|\leq \|\lambda_p(f)\|^2\leq 1$. It follows that $\|\theta\|\leq 1$. This proves the Claim 1.

Now we will prove (iv). Since $\sigma_p=\theta\circ\lambda_p$ and $\|\theta\|\leq 1$, it follows that $$\|\sigma_p(f)\|=\|\theta\circ\lambda_p(f)\|\leq \|\lambda_p(f)\|.$$ It is easy to check that $\sigma_p(s)\delta_e=\delta_e$. Hence $$\|\lambda_p(f)\|\geq\|\sigma_p(f)\|\geq\|\sigma_p(f)\delta_e\|=|\sum_{t\in G} f(t)|.$$ This proves (iv).

(v)$\Rightarrow$ (vi): For any finite subset $E\subset G$, by (iv), we have $\|\sum_{t\in E}\lambda_p(t)\|\geq |E|$. Obviously, $\|\sum_{t\in E}\lambda_p(t)\|\leq |E|$. This proves (vi).

(vi)$\Rightarrow$ (i): For any finite subset $E\subset G$, we can assume that $e\in E$, where $e$ is the unit of $G$. By (v),
we have $\|\frac{\sum_{t\in E}\lambda_p(t)}{|E|}\|=1$. Then there exists a sequence $(\xi_i)$ in $l^p(G)$ such that $\xi_i\geq 0$, $\|\xi_i\|_p=1$ and $\|\frac{\sum_{t\in E}\lambda_p(t)}{|E|}\xi_i\|\rightarrow 1$. Since $l^p(G)$ is a uniformly convex Banach space for $p\in (1,\infty)$, it follows from  \cite{Fan} that $l^p(G)$ is a full $k$-convex Banach space for all positive integer $k\geq 2$.
Then $$\|\lambda_p(s)\xi_i-\lambda_p(t)\xi_i\|_p\rightarrow 0$$ for all $s,t\in E$.
Since $e\in E$, it follows that $$\|\lambda_p(s)\xi_i-\xi_i\|_p\rightarrow 0$$ for all $s\in E$. Then there exists a net $(\eta_\alpha)$ in $l^p(G)$ such that $\eta_\alpha\geq 0$, $\|\eta_\alpha\|_p=1$ and $$\|\lambda_p(s)\eta_\alpha-\eta_\alpha\|_p\rightarrow 0$$ for all $s\in G$. By Definition \ref{amenable},
we have that $G$ is amenable.
\end{proof}


\section*{Acknowledgements}

The author is supported by National Natural Science Foundation of China (grant number 12201240) and Research Start-up Funding Program of Hangzhou Normal University (grant number 4235C50224204070).




\begin{thebibliography}{9}

\bibitem{an}
G. An, J-J. Lee, Z-J. Ruan, On $p$-approximation properties for $p$-operator spaces, J. Funct. Anal. 259 (2010), no. 4, 933--974.

\bibitem{claire}
C. Anantharaman-Delaroche, On tensor products of group $C^*$-algebras and related topics, Limits of graphs in group theory and computer science, EPFL Press, Lausanne, 2009, pp. 1--35.

\bibitem{Blecher and Phillips}
        D. Blecher and N. C. Phillips, $L^{p}$ operator algebras with approximate identities I. Pacific
J. Math. 303 (2019), no. 2, pp. 401--457.

\bibitem{Brown and Ozawa}
        N. P. Brown, N. Ozawa, $C^*$-algebras and finite-dimensional approximations, Graduate
        Studies in Mathematics 88, American Mathematical Society, Providence, RI, 2008.

\bibitem{Connes}
A. Connes, On the cohomology of operator algebras, J. Funct. Anal. 28 (1978), no. 2, 248--253.


\bibitem{Choi and Effros}
     M. D. Choi, E. G. Effros, Nuclear $C^*$-algebras and the approximation property, Amer. J. Math. 100
(1978), no. 1, 61--79.

\bibitem{Choi and}
Y. Choi, E. Gardella, H. Thiel, Rigidity results for $L^p$-operator algebras and
applications, Adv. Math. 452 (2024), Paper No. 109747, 47 pp.

\bibitem{Chung}
 Chung, Yeong Chyuan, Li, Kang, Rigidity of $l^p$ Roe-type algebras, Bull. Lond. Math. Soc. 50 (2018), no. 6, 1056--1070.

\bibitem{Dales}
H. Garth Dales, P. Aiena; E. J$\ddot{\mathrm{o}}$rg, K. Laursen and George A. Willis, Introduction to Banach algebras, operators, and harmonic analysis, London Math. Soc. Stud. Texts, 57
Cambridge University Press, Cambridge, 2003, xii+324 pp.

\bibitem{Daws1}
M. Daws, $p$-operator spaces and Fig$\grave{\mathrm{a}}$-Talamanca–Herz algebras, J. Operator Theory   63 (2010), no. 1, 47--83.


\bibitem{Emilie}
E. M. Elkiær, Symmetrized pseudofunction algebras from $L^p$-representations and amenability of locally compact groups, preprint arXiv:2411.07710v1 [math.FA] (2024).







\bibitem{Fan}
K. Fan, I. Glicksberg, Some geometric properties of the spheres in a normed linear space, Duke Math. J. 25 (1958), 553--568.

%

%

%
\bibitem{Gardella and Thiel}
       E. Gardella, H. Thiel, Group algebras acting on  $L^{p}$-spaces, J. Fourier Anal. Appl. 21(2015),
no.6, 1310-1343.

\bibitem{Gardella and Thiel isometry}
       E. Gardella, H. Thiel, Banach algebras generated by an invertible isometry of an $L^{p}$-space, J. Funct. Anal. (2015), 1796--1839.


\bibitem{Gardella and Lupini groupoid}
         E. Gardella, M. Lupini, Representations of $\acute{e}$tale groupoids on $L^{p}$-spaces, Adv. Math., 318 (2017), pp. 233--278.

\bibitem{Gardella and Thiel convolution}
      E. Gardella, H. Thiel, Representations of $p$-convolution algebras on
$L^{q}$-spaces, Trans. Amer. Math. Soc. 371 (2019), 2207--2236.

\bibitem{Gardella modern}
        E. Gardella, A modern look at algebras of operators on $L^{p}$-spaces, Expo. Math. 39 (2021), no. 3, 420--453.

\bibitem{Gardella and Thiel iso}
E. Gardella, H. Thiel, Isomorphisms of algebras of convolution operators, Ann. Sci. $\acute{\mathrm{E}}$c. Norm. Sup$\acute{\mathrm{e}}$r. (4) 55 (2022), no. 5, 1433--1471.

\bibitem{Haagerup}
U. Haagerup, All nuclear $C^*$-algebras are amenable, Invent. Math. 74 (1983), no. 2, 305--319.

\bibitem{Herz}
C. Herz, The theory of $p$-spaces with an application to convolution operators,
Trans. Amer. Math. Soc. 154 (1971), pp. 69--82.

\bibitem{Herz1}
C. Herz, Harmonic synthesis for subgroups, Ann. Inst. Fourier (Grenoble), 23
(1973), 91--123.

\bibitem{Herz2}
C. Herz, On the asymmetry of norms of convolution operators, I. J. Funct. Anal. 23 (1976), no. 1, 11--22.

\bibitem{Johnsom}
B. E. Johnson, Cohomology in Banach algebras,
Memoirs of the American Mathematical Society, No. 127. American Mathematical Society, Providence, R.I., 1972.

\bibitem{Kirchberg}
     E. Kirchberg, $C^*$-nuclearity implies CPAP, Math. Nachr. 76 (1977), 203--212.

\bibitem{kwa}
 S. Kwapie$\acute{\mathrm{n}}$, On operators factorizable through $L_p$ space, Suppl$\acute{\mathrm{e}}$ment au Bull. Soc. Math. France, Tome 100 Soci$\acute{\mathrm{e}}$t$\acute{\mathrm{e}}$ Math$\acute{\mathrm{e}}$matique de France, Paris, 1972, pp. 215--225.

\bibitem{Lance}
E. C. Lance, On nuclear $C^*$-algebras, J. Funct. Anal. 12 (1973), 157--176.

\bibitem{Lee}
J-J. Lee, On $p$-operator spaces and their applications, ProQuest LLC, Ann Arbor, MI, 2010, 118 pp.


\bibitem{Merdy}
C. Le Merdy,  Factorization of $p$-completely bounded multilinear maps, Pacific J. Math. 172 (1996), no. 1, 187--213.






\bibitem{Phillips Lp Cuntz}
N. C. Phillips, Analogs of Cuntz algebras on $L^p$ spaces, preprint arXiv:1201.4196 [math.FA] (2012).

\bibitem{Odp}
N. C. Phillips, Simplicity of UHF and Cuntz algebras on $L^p$ spaces, preprint arXiv:
1309.0115 [math.FA] (2013).

\bibitem{Lp UHF}
N. C. Phillips, Isomorphism, nonisomorphism, and amenability of $L^p$ UHF algebras, preprint arXiv: 1309.3694v2 [math.FA] (2013).

\bibitem{N. C. Phillips Lp}
N. C. Phillips, Crossed products of $L^{p}$ operator algebras and the $K$-theory of Cuntz algebras on $L^{p}$ spaces, preprint arXiv:1309.6406 [math.FA] (2013).


\bibitem{PlpsOpenQues}
N. C. Phillips, Open problems related to operator algebras on $L^{p}$ spaces, preprint,
2014, https://pdfs.semanticscholar.org/0823/5038ec45079e7721a59021a4492da2c2b1a3.pdf.

\bibitem{Phillips look like}
N. C. Phillips, Operator algebras on $L^{p}$ spaces which "look like" $C^{*}$-algebras,
2014, https://pages.uoregon.edu/ncp/Talks/20140527_GPOTS/LpOpAlgs_TalkSummary.pdf.

\bibitem{Lp AF}
N. C. Phillips  and M. G. Viola, Classification of $L^p$ AF algebras, Internat. J. Math. 31 (2020), no. 13, 2050088, 41 pp.


%
\bibitem{Pisier}
G. Pisier, Tensor products of $C^*$-algebras and operator spaces--the Connes-Kirchberg problem, London Math. Soc. Stud. Texts, 96
Cambridge University Press, Cambridge, 2020, x+484 pp.

%

\bibitem{Smith}
R. R. Smith, Completely contractive factorizations of $C^*$-algebras, J. Funct. Anal. 64 (1985), no. 3, 330--337.


\bibitem{Takesaki}
 M. Takesaki, On the cross-norm of the direct product of $C^*$-algebras, Tohoku Math. J. (2) 16 (1964), 111--122.


\bibitem{WW}
       Q. Wang, Z. Wang, Notes on the $l^p$-Toeplitz algebra on $l^p(\mathbb{N})$, Israel J. Math. 245 (2021), no. 1, 153--163.

\bibitem{WZ}
        Z. Wang, S. Zhu, On the Takai duality for $L^{p}$ operator crossed products, Math. Z. 304 (2023), no. 4, Paper No. 54, 23 pp.

\bibitem{WZ2}
          Z. Wang, S. Zhu, $p$-nuclearity of $L^p$-operator crossed products,  preprint arXiv:2305.03933v2 [math.FA] (2024).
%
\end{thebibliography}
\end{document}